\newcommand{\excise}[1]{}%$\star$\textsc{#1}$\star$}
\newtheorem{thm}{Theorem}%[section]
\newtheorem{lemma}[thm]{Lemma}
\newtheorem{rem}[thm]{Remark}
\newtheorem{Warn}[thm]{Caution}
\newenvironment{Abs}{\smallskip\begin{center}\begin{minipage}{13.8cm}}%
                     {\end{minipage}\end{center}\smallskip}
\def\la{\lambda}
\def\ga{\gamma}
\def\al{\alpha}
\def\be{\beta}
\def\<{\langle}
\def\>{\rangle}
\def\0{{\mathbf 0}}
\def\.{\hskip.06cm}
\def\ts{\hskip.03cm}
\def\nin{\noindent}
\begin{document}
\title[Strict unimodality]{Strict unimodality of $q$-binomial coefficients}

\author[Igor~Pak]{ \ Igor~Pak$^\star$}

\author[Greta~Panova]{ \ Greta~Panova$^\star$}

\date{\today}

\thanks{\thinspace ${\hspace{-.45ex}}^\star$Department of Mathematics, UCLA, Los Angeles, CA 90095, USA; \ts
\texttt{\{pak,panova\}@math.ucla.edu}}

\maketitle

\begin{Abs}{\footnotesize {\sc Abstract.} \ts
We prove strict unimodality of the $q$-binomial coefficients~$\ts \binom{n}{k}_q$
as polynomials in~$q$.  The proof is based on the combinatorics of certain
Young tableaux and the semigroup property of Kronecker coefficients
of $S_n$~representations. }
\end{Abs}

%\smallskip
%
%\begin{Abs} {\footnotesize {\sc R\'{e}sum\'{e}.}
%Nous prouvons l'unimodalit\'{e} stricte des coefficients $q$-binomiaux ~ $ \ts \binom{n}{k}_q $
%comme des polyn\^{o}mes en ~ $ q $. La preuve est bas\'{e}e sur la combinatoire de certains
%Tableaux de Young et la propri\'{e}t\'{e} semi-groupe des coefficients Kronecker
%de $ S_n $ ~ repr\'{e}sentations.
%
% }
%\end{Abs}

\bigskip

\section*{Introduction}

\noindent
A sequence $(a_1,a_2,\ldots,a_n)$ is called \emph{unimodal}, if for some~$k$ we have
$$
a_1 \, \le \, a_2 \, \le \,  \ldots \, \le \,  a_k \, \ge \,  a_{k+1}
\, \ge \, \ldots \, \ge \, a_n\ts.
$$
The \emph{$q$-binomial}~(Gaussian)~\emph{coefficients} are defined as:
$$
\binom{m+\ell}{m}_q
\, = \ \. \frac{(q^{m+1}-1)\. \cdots\. (q^{m+\ell}-1)}{(q-1)\.\cdots\. (q^{\ell}-1)}
\ \. = \, \, \sum_{n=0}^{\ell\ts m} \, \. p_n(\ell,m) \. q^n\ts.
$$
\emph{Sylvester's theorem} establishes unimodality of the sequence
$$
p_0(\ell,m)\ts, \, p_1(\ell,m)\ts, \, \ldots \,, \, p_{\ell\ts m}(\ell,m)\ts.
$$
This celebrated result was first conjectured by Cayley in~1856, and proved
by Sylvester using Invariant Theory, in a pioneer~1878 paper~\cite{Syl}.
In the past decades, a number of new proofs and generalizations were
discovered both by algebraic and combinatorial tools, see Section~\ref{s:fin}.
In the previous paper~\cite{PP}, we found
a new proof of Sylvester's theorem using combinatorics of Kronecker
and Littlewood--Richardson coefficients.  Here we use the
recently established \emph{semigroup property} of Kronecker
coefficients to prove \emph{strict unimodality} of
$q$-binomial coefficients:

\smallskip

\begin{thm}\label{t:strict}
For all $\.\ell,m\ge 8\ts$, we have the following strict inequalities:
$$(\circ) \qquad
p_{1}(\ell,m)\. < \. \ldots \. < \. p_{\lfloor \ell\ts m/2\rfloor}(\ell,m)
\. = \. p_{\lceil \ell\ts m/2\rceil}(\ell,m) \. > \. \ldots \. > \. p_{\ell\ts m-1}(\ell,m)\ts.
$$
\end{thm}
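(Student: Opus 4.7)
The plan is to reduce strict unimodality to positivity of certain Kronecker coefficients of $S_{\ell m}$, and then to establish that positivity via the semigroup property. The starting point is an identity (or inequality) of the form
$$ p_n(\ell,m) \. - \. p_{n-1}(\ell,m) \ \geq \ g\bigl((\ell m - n,\ts n),\ts (m^\ell),\ts (m^\ell)\bigr),$$
which is implicit in the authors' earlier combinatorial proof of Sylvester's theorem in~\cite{PP}. By the symmetries $p_n(\ell,m) = p_{\ell m - n}(\ell,m)$ and $\binom{m+\ell}{m}_q = \binom{m+\ell}{\ell}_q$, it suffices to show this right-hand side is strictly positive for $2 \leq n \leq \lfloor \ell m/2 \rfloor$, assuming $\ell \leq m$, with the initial step $p_1 < p_2$ handled separately by direct computation.

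To prove positivity of $g\bigl((\ell m - n,\ts n),\ts (m^\ell),\ts (m^\ell)\bigr)$, I would invoke the \emph{semigroup property}: if $g(\al_1,\be_1,\ga_1)>0$ and $g(\al_2,\be_2,\ga_2)>0$, then $g(\al_1+\al_2,\ts \be_1+\be_2,\ts \ga_1+\ga_2)>0$. The strategy is to select a small finite collection of \emph{base triples} of the form $\bigl((N_i - k_i,\ts k_i),\ts (m_i^{\ell_i}),\ts (m_i^{\ell_i})\bigr)$ with $g > 0$, verified by direct or computer calculation, and then to express the target triple $\bigl((\ell m - n,\ts n),\ts (m^\ell),\ts (m^\ell)\bigr)$ as a coordinate-wise non-negative integer combination of these base triples. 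The base rectangles must be chosen jointly so that (i) every sufficiently large rectangle $(m^\ell)$ with $\ell \leq m$ decomposes as a sum of them, and (ii) every two-row shape $(\ell m - n,\ts n)$ in the relevant range simultaneously decomposes as the corresponding sum of the two-row base shapes.

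The main obstacle I expect is carrying out this \emph{compatible} decomposition uniformly in $n$, $\ell$, and $m$. It is routine to cover the middle range $n \approx \ell m/2$ via balanced base triples, but the extreme cases $n$ near $2$ (where the second row of the target is very short) and $n$ near $\lfloor \ell m/2 \rfloor$ (where parity of $\ell m$ intervenes) are more delicate: both coordinates must be decomposed simultaneously, which constrains the multiplicities in the combination. The numerical hypothesis $\ell,m \geq 8$ is presumably the exact threshold beyond which a fixed finite list of base triples always admits a compatible decomposition; below this threshold, some differences $p_n - p_{n-1}$ genuinely vanish. I anticipate the write-up will split into cases by the size of $n$ and by the residues of $\ell, m$ modulo the dimensions appearing in the base rectangles, with a small number of borderline $(\ell, m, n)$ not covered by the generic decomposition checked by a separate direct argument.
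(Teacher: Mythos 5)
Your overall route is the paper's route: the difference $p_k-p_{k-1}$ is expressed through a Kronecker coefficient (the paper's Lemma~\ref{l:g_partitions} gives the exact equality $p_k(\ell,m)-p_{k-1}(\ell,m)=g\bigl(m^\ell,m^\ell,(\ell m-k,k)\bigr)$, so your inequality is available), positivity is then propagated by the semigroup property (Theorem~\ref{t:semi}) from a finite stock of computer-verified cases, and the delicate points you flag --- never using a summand with second row equal to $1$ (since $p_1-p_0=0$) and the parity issue at $k=\lfloor \ell m/2\rfloor$ --- are exactly the ones the paper negotiates: in the Additivity Lemma~\ref{l:add} one writes $k=r+s$ with $r,s\neq 1$ and assumes at least one of $\ell,m_1,m_2$ even and one $\geq 3$.

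There is, however, one step of your plan that fails as stated: no fixed finite collection of base triples $\bigl((N_i-k_i,k_i),(m_i^{\ell_i}),(m_i^{\ell_i})\bigr)$ can have the property that every large rectangle $(m^\ell)$ with $\ell\le m$ is a coordinate-wise sum of the base rectangles. Adding partitions coordinate-wise, a sum of rectangles is again a rectangle only when all nonzero summands have the same number of rows, so any decomposition of $(m^\ell)$ may use only base rectangles of height exactly $\ell$; since $\ell$ is unbounded even after normalizing $\ell\le m$, a finite list of rectangles cannot suffice, and the symmetry $\binom{m+\ell}{m}_q=\binom{m+\ell}{\ell}_q$ only orders the two parameters, it does not bound either of them. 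The paper's fix is to make the semigroup step one-dimensional: Lemma~\ref{l:add} adds rectangles of the same height, $(m_1^\ell)+(m_2^\ell)=\bigl((m_1+m_2)^\ell\bigr)$, and deduces $(\circ)$ for $(\ell,m_1+m_2)$ from $(\circ)$ for $(\ell,m_1)$ and $(\ell,m_2)$; then in Theorem~\ref{t:rest} the finite basis lives at the level of \emph{pairs} $(\ell,m)$, namely the directly computed cases $8\le \ell,m\le 15$, from which one iterates the lemma in the $m$-direction (adding $8$) and then moves in the $\ell$-direction via the $\ell\leftrightarrow m$ symmetry --- a two-dimensional Klarner-type system rather than a finite basis of triples. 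With that adjustment your outline becomes essentially the published argument; note also that $\ell,m\ge 8$ is not the exact threshold you conjecture: Theorem~\ref{t:rest} shows $(\circ)$ already holds for all $\ell,m\ge 5$ except nine listed pairs, where only the middle three coefficients coincide.
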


\smallskip

\noindent
These and the remaining cases are covered in Theorem~\ref{t:rest}.
Note that neither combinatorial nor algebraic tools imply~$(\circ)$ directly,
as Sylvester's theorem is notoriously hard to prove and extend.
In our previous paper~\cite{PP}, we proved strict unimodality of the diagonal
coefficients $\.\binom{2\ts m}{m}_q$  by combining technical algebraic
tools from~\cite{PPV} and Almkvist's analytic unimodality results.

% Theorem~\ref{t:rest} below covers remaining cases of strict unimodality.

\medskip

\nin
The following result lies in the heart of the proof of the theorem.

\smallskip

\begin{lemma}[Additivity Lemma] \label{l:add}
Suppose inequalities~$(\circ)$ as in the theorem hold for pairs $(\ell,m_1)$
and~$(\ell,m_2)$.  Suppose also that at least one of integers $\{\ell,m_1,m_2\}$ is even, 
and at least one $\geq 3$.  Then~$(\circ)$ holds for $(\ell,m_1+m_2)$.
\end{lemma}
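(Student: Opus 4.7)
The plan is to use the formula from our previous paper~\cite{PP} that expresses the difference $p_n(\ell,m) - p_{n-1}(\ell,m)$ as a Kronecker coefficient (or nonnegative sum of such), say $g(\alpha(\ell,m,n), \beta(\ell,m,n), \gamma(\ell,m,n))$, with the crucial feature that the partition parameters depend on $(m,n)$ in an additive fashion for fixed~$\ell$. Under this dictionary, the hypothesis that~$(\circ)$ holds for $(\ell, m_i)$ translates to positivity of the Kronecker coefficient for every index $n_i \in [2, \lfloor \ell m_i/2\rfloor]$, while the degenerate index $n_i = 0$ supplies a base positive triple corresponding to $p_0 - p_{-1} = 1$.

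For each $n$ in the range $[2, \lfloor \ell(m_1+m_2)/2\rfloor]$ where we need to show $p_n(\ell,m_1+m_2) > p_{n-1}(\ell,m_1+m_2)$, the strategy is to exhibit a decomposition $n = n_1 + n_2$ with each $n_i \in \{0\} \cup [2, \lfloor \ell m_i/2\rfloor]$. By additivity of the Kronecker parameters, the two positive triples for $(\ell,m_i,n_i)$ then sum exactly to the triple for $(\ell, m_1+m_2, n)$; the \emph{semigroup property} of Kronecker coefficients---that the set of triples with $g > 0$ is closed under componentwise sum---immediately yields positivity of $g$ for the combined triple, and hence the desired strict inequality. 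The upper half of~$(\circ)$ follows from the symmetry $p_n = p_{\ell(m_1+m_2) - n}$.

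The main obstacle is ensuring the decomposition $n = n_1+n_2$ actually exists at the boundary of the range; this is precisely where the numerical hypotheses enter. At the top, $n = \lfloor \ell(m_1+m_2)/2\rfloor$, the maximum value of $n_1+n_2$ with both $n_i$ in their strict-unimodality ranges is $\lfloor \ell m_1/2\rfloor + \lfloor \ell m_2/2\rfloor$, which equals $\lfloor \ell(m_1+m_2)/2\rfloor$ unless both $\ell m_1$ and $\ell m_2$ are odd---in which case it falls short by exactly one. The parity hypothesis (at least one of $\ell, m_1, m_2$ even) forces some $\ell m_i$ to be even and rules out precisely this obstruction. At the bottom (e.g.\ $n = 3$), covering requires $\lfloor \ell m_i/2\rfloor \geq 3$ for some~$i$, i.e.\ some $\ell m_i \geq 6$, which the ``at least one $\geq 3$'' hypothesis supplies given $\ell, m_i \geq 2$; interior indices are then filled in by routine case analysis. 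The conceptual crux is the very first step---verifying that the Kronecker identity of~\cite{PP} can be written with genuinely additive parameters in $(m, n)$ for fixed~$\ell$, since only then does the Kronecker semigroup property glue the two widths together.
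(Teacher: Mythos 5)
Your proposal is correct and follows essentially the same route as the paper: the difference $p_k(\ell,m)-p_{k-1}(\ell,m)$ is exactly the Kronecker coefficient $g(m^\ell,m^\ell,(\ell m-k,k))$ (Lemma~\ref{l:g_partitions}, which the paper extracts from~\cite{PP}), whose parameters are additive in $(m,k)$ for fixed $\ell$, and the semigroup property plus your decomposition $n=n_1+n_2$ with $n_i\in\{0\}\cup[2,\lfloor \ell m_i/2\rfloor]$ is precisely the paper's argument. Your boundary analysis (parity hypothesis for the top index, the ``one $\geq 3$'' hypothesis for $n=3$ via $(n_1,n_2)=(3,0)$ or $(0,3)$) matches the paper's case analysis exactly.
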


\smallskip

Although stated combinatorially, the only proof we know is algebraic.
We first establish the lemma and then combine it with computational
results to derive the theorem.  We conclude with historical remarks,
brief overview of the literature and open problems.

\newpage

\section{Kronecker coefficients}

\nin
We adopt the standard notation in combinatorics of partitions
and representation theory of~$S_n$ (see e.g.~\cite{Mac,Sta}).
We use $g(\la,\mu,\nu)$ to denote the \emph{Kronecker coefficients}:
$$
\chi^\la \ts \otimes \ts \chi^\mu \, = \, \sum_{\nu \vdash n}
\, g(\la,\mu,\nu)\. \chi^\nu\., \quad \text{where} \ \ \la,\mu \vdash n\ts.
$$
The following technical result was never stated before,
but is implicit in~\cite{PP} (see also~\cite[$\S 4$]{Val}).

\begin{lemma}\label{l:g_partitions}
Let $n=\ell \ts m$, $\tau_k=(n-k,k)$, where $0\leq k\leq n/2$ and set $p_{-1}(\ell,m)=0$.  Then
$$g(m^\ell,m^\ell,\tau_k) \, = \, p_k(\ell,m) \. - \. p_{k-1}(\ell,m)\ts.
$$
\end{lemma}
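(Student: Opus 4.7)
The plan is to express $\chi^{\tau_k}$ as a telescoping difference of permutation characters, then compute each term using Frobenius reciprocity and the Littlewood--Richardson rule for a rectangular shape. Let $\eta_k := \mathrm{Ind}_{S_{n-k}\times S_k}^{S_n}\mathbf{1}$ denote the permutation character of $S_n$ on $k$-subsets of $[n]$. The two-row partitions that dominate $(n-k,k)$ (for $k\le n/2$) are precisely $(n-i,i)$ with $0\le i\le k$, each with Kostka number $1$, so Young's rule gives $\eta_k=\sum_{i=0}^{k}\chi^{(n-i,i)}$. Hence $\chi^{\tau_k}=\eta_k-\eta_{k-1}$ (with the convention $\eta_{-1}=0$), and the lemma reduces to the identity
$$
\big\langle \chi^{m^\ell}\chi^{m^\ell},\, \eta_k\big\rangle \, = \, p_k(\ell,m).
$$

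To prove this identity, I would apply Frobenius reciprocity: the left-hand side equals the multiplicity of the trivial representation of $S_{n-k}\times S_k$ inside the restriction of $\chi^{m^\ell}\otimes\chi^{m^\ell}$. The Littlewood--Richardson rule gives the restriction as
$$
\chi^{m^\ell}\big|_{S_{n-k}\times S_k} \, = \, \sum_{\mu\vdash n-k,\, \nu\vdash k} c^{m^\ell}_{\mu,\nu}\,\chi^\mu\otimes\chi^\nu,
$$
and hence, by orthogonality of the irreducible characters of $S_{n-k}\times S_k$,
$$
\big\langle \chi^{m^\ell}\chi^{m^\ell},\, \eta_k\big\rangle \, = \, \sum_{\mu\vdash n-k,\, \nu\vdash k}\big(c^{m^\ell}_{\mu,\nu}\big)^2.
$$

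The final ingredient is the classical rectangular case of the Littlewood--Richardson rule: for $\lambda=m^\ell$, every coefficient $c^{m^\ell}_{\mu,\nu}$ lies in $\{0,1\}$, and it equals $1$ exactly when $\mu\subseteq m^\ell$ and $\nu$ is the complementary partition defined by $\nu_i=m-\mu_{\ell+1-i}$. Specializing to $|\mu|=n-k$ and $|\nu|=k$, the nonzero terms are in bijection with partitions $\nu$ of $k$ fitting inside the $\ell\times m$ box, of which there are exactly $p_k(\ell,m)$. Since each summand is $0$ or $1$, the sum equals $p_k(\ell,m)$, and combining with the telescoping identity yields the claim. The only nontrivial ingredient is the rectangular LR fact; if one prefers not to quote it, a direct proof fills LR skew tableaux of shape $m^\ell/\mu$ from the bottom row upward, showing that the unique feasible filling forces $\nu=\mu^c$. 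This is where the rectangular hypothesis is essential and where I would focus the verification.
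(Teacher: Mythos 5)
Your proof is correct and is essentially the paper's argument translated from symmetric functions into character theory: your identity $\chi^{\tau_k}=\eta_k-\eta_{k-1}$ (via Young's rule) is exactly the two-row Jacobi--Trudi identity $s_{(n-k,k)}=s_ks_{n-k}-s_{k-1}s_{n-k+1}$ used in the paper, and your Frobenius-reciprocity computation of $\langle \chi^{m^\ell}\chi^{m^\ell},\eta_k\rangle=\sum_{\mu,\nu}\bigl(c^{m^\ell}_{\mu\nu}\bigr)^2$ is the special case of Littlewood's formula that the paper invokes. The concluding step --- that rectangular Littlewood--Richardson coefficients are $0$ or $1$ and equal $1$ precisely for complementary pairs, so the sum counts partitions of $k$ inside the $\ell\times m$ box --- is identical to the paper's.
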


\begin{proof}
Let $\la \vdash n$, $\pi\vdash k$ and $\theta\vdash n-k$, and
let ``$*$'' denote the Kronecker product of symmetric functions.
Littlewood's formula %~\cite{Lit}
states that
$$
s_{\lambda}*(s_{\pi}\ts s_{\theta}) \. = \.
\sum_{\alpha\vdash k\ts, \.\beta \vdash n-k } \.
c^{\lambda}_{\alpha\ts\beta}\ts (s_{\alpha}*s_{\pi})\ts (s_{\beta}*s_{\theta})\.,
$$
where $c^{\lambda}_{\mu\ts\nu}$ denote the Littlewood--Richardson
coefficients.  Clearly, \ts $s_{\nu}*s_{a}=s_{\nu}$\ts, for all $\nu\vdash a$.
We obtain:
$$
s_{\lambda}\ts *\ts (s_{k}\ts s_{n-k}) \. = \.
\sum_{\alpha \vdash k, \. \beta \vdash n-k} \. c^{\lambda}_{\alpha\ts \beta}  \.
s_{\alpha}\ts s_{\beta} \. =\.
\sum_{\alpha\vdash k, \. \beta \vdash n-k, \. \nu \vdash n} \.
c^{\lambda}_{\alpha\ts\beta} \. c^{\mu}_{\alpha\ts\beta}\. s_{\mu}\ts.
$$
By the Jacobi--Trudi formula, we have:
$$
s_{\tau_k} \. = \. s_{(n-k,k)} \. = \. s_k \ts s_{n-k} \. - \. s_{k-1} \ts s_{n-k+1}\..
$$
This gives:
$$
s_{\lambda}*s_{\tau_k} \. = \.
s_{\lambda}\ts * \ts (s_k\ts s_{n-k}) \. - \. s_{\lambda}\ts * \ts (s_{k-1}\ts s_{n-k+1}) \. = \.
\sum_{\mu \vdash n} \. a_k(\lambda,\mu)\ts s_{\mu} \. - \. \sum_{\mu \vdash n} \. a_{k-1}(\lambda,\mu)\ts s_{\mu}\ts,
$$
where
$$
a_k(\lambda,\mu) \, = \,
\sum_{\alpha \vdash k, \. \beta\vdash n-k} \, c^{\lambda}_{\alpha\ts\beta}\. c^{\mu}_{\alpha\ts\beta}\..
$$
Taking the coefficient at $s_{\mu}$ in the expansion of \ts $s_{\lambda}*s_{\tau_k}$
in terms of Schur functions, we get:
$$(\divideontimes) \qquad
g(\lambda,\mu,\tau_k) \. = \. a_k(\lambda,\mu) \. - \. a_{k-1}(\lambda,\mu)\ts.
$$
Let $\lambda=\mu=(m^\ell)$.  Recall that
$c^{(m^\ell)}_{\alpha\beta}=1$ if $\al$ and~$\beta$ are complementary
partitions within the  rectangle~$(m^\ell)$; and~$c^{(m^\ell)}_{\alpha\beta}=0$ otherwise
(see e.g.~\cite{MY}).  Therefore,
$$
a_k(m^\ell,m^\ell) \, = \, \sum_{\alpha \vdash k, \, \alpha \subset (m^\ell)}\. 1^2 \, = \. p_k(\ell,m)\ts.
$$
Substituting this into~$(\divideontimes)$, gives the result.  \end{proof}

\smallskip

\begin{thm}[Semigroup property]  \label{t:semi}
Suppose $\la,\mu,\nu,\al,\be,\ga$ are partitions of~$n$, such that \\
$\ts g(\la,\mu,\nu) >0\ts $ and $\ts g(\al,\be,\ga)> 0$. Then
$\ts g(\la+\al,\mu+\be,\nu+\ga)\ts >\ts 0$.
\end{thm}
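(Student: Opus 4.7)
The plan is to realize $g(\la,\mu,\nu)$ as the dimension of a space of highest-weight vectors in the polynomial ring on a tensor space, so that the semigroup property reduces to the fact that a polynomial ring is an integral domain.

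Fix integers $a,b,c$ at least as large as the number of parts of any of $\la,\mu,\nu,\al,\be,\ga$, let $W = \cc^a \otimes \cc^b \otimes \cc^c$, and consider the natural action of $G = \GL_a \times \GL_b \times \GL_c$ on the symmetric algebra $R = \text{Sym}(W)$. A standard consequence of the Cauchy identity, combined with the Kronecker-coefficient description of the Schur functor of a tensor product, is the decomposition
$$
R \. = \. \bigoplus_{n \ge 0} \, \bigoplus_{\la,\mu,\nu \vdash n} \. g(\la,\mu,\nu) \. V^a_\la \otimes V^b_\mu \otimes V^c_\nu,
$$
where $V^a_\la$ denotes the irreducible polynomial $\GL_a$-module of highest weight $\la$. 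Fixing a maximal unipotent $U \ssu G$ and a maximal torus $T \ssu G$, this means $g(\la,\mu,\nu)$ equals the dimension of the space $R^U_{(\la,\mu,\nu)}$ of $U$-invariants in $R$ of $T$-weight $(\la,\mu,\nu)$.

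Given this translation, the theorem is immediate. By hypothesis, there exist non-zero $U$-invariants $f_1,f_2 \in R$ of $T$-weights $(\la,\mu,\nu)$ and $(\al,\be,\ga)$ respectively. Their product $f_1 f_2$ is again $U$-invariant, has $T$-weight $(\la+\al,\mu+\be,\nu+\ga)$, and is non-zero because $R$ is a polynomial ring, hence an integral domain. Therefore $g(\la+\al,\mu+\be,\nu+\ga) > 0$.

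The only non-trivial step is establishing the multiplicity formula for $R$; everything after it is formal. This formula is obtained by iterating the Cauchy identity $\text{Sym}^n(A \otimes B) = \bigoplus_{\la \vdash n} S^\la A \otimes S^\la B$ together with the identity $S^\la(\cc^b \otimes \cc^c) = \bigoplus_{\mu,\nu \vdash |\la|} g(\la,\mu,\nu)\. V^b_\mu \otimes V^c_\nu$ for the branching of $\GL(\cc^b \otimes \cc^c) \downarrow \GL_b \times \GL_c$. Once the invariant-theoretic translation is in place, the semigroup property is a reflection of the elementary fact that products of non-zero semi-invariants remain non-zero semi-invariants, a standard argument in geometric invariant theory.
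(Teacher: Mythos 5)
This theorem is not proved in the paper at all: the authors quote it as a known result, conjectured by Klyachko and proved by Christandl, Harrow and Mitchison \cite{CHM}, so there is no in-paper argument to compare against. Judged on its own, your proof is correct and is essentially the standard invariant-theoretic argument for such semigroup statements (the same mechanism as Brion--Knop for Littlewood--Richardson coefficients, and close in spirit to the proof in the literature for Kronecker coefficients). The two ingredients you use are both standard and correctly invoked: the iterated Cauchy decomposition together with $S^\lambda(\mathbb{C}^b\otimes\mathbb{C}^c)\cong\bigoplus_{\mu,\nu} g(\lambda,\mu,\nu)\, V^b_\mu\otimes V^c_\nu$ gives
$$
\mathrm{Sym}\bigl(\mathbb{C}^a\otimes\mathbb{C}^b\otimes\mathbb{C}^c\bigr)\;\cong\;\bigoplus_{n\ge 0}\ \bigoplus_{\lambda,\mu,\nu\vdash n} g(\lambda,\mu,\nu)\, V^a_\lambda\otimes V^b_\mu\otimes V^c_\nu,
$$
valid as long as $a,b,c$ bound the number of parts of the partitions involved (your stability hypothesis takes care of this, and note $\nu+\gamma$ has no more parts than $\max(\ell(\nu),\ell(\gamma))$, so the target triple is also captured); and for a rational representation of the reductive group $\mathrm{GL}_a\times\mathrm{GL}_b\times\mathrm{GL}_c$ the multiplicity of the irreducible of highest weight $(\lambda,\mu,\nu)$ equals the dimension of the space of $U$-invariants of $T$-weight $(\lambda,\mu,\nu)$. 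Since $U$ and $T$ act by algebra automorphisms, the product of the two nonzero highest-weight vectors is a $U$-invariant of weight $(\lambda+\alpha,\mu+\beta,\nu+\gamma)$, nonzero because the symmetric algebra is a domain, which is exactly what is needed. So your proposal supplies a complete, self-contained proof of a result the paper only cites; the one thing worth flagging is that this observation in effect reproves the theorem of \cite{CHM}, and your write-up could acknowledge that the argument (in particular the multiplicity formula via Schur--Weyl duality) is the known route rather than a new one. A small bonus of this point of view is that it also yields Manivel's refinement quoted in Remark~\ref{r:man} with only slightly more work, by multiplying a fixed highest-weight vector into a whole multiplicity space and using again that the ring is a domain.
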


\smallskip

\begin{rem}\label{r:man} {\rm
This result was conjectured by Klyachko in~2004, and recently proved
in~\cite{CHM}.  It is the analogue of the semigroup property
of Littlewood--Richardson coefficients proved by Brion and Knop
in~1989 (see~\cite{Zel} for the history and the related results).
Unfortunately, the Knutson--Tao \emph{saturation theorem} does
not generalize to Kronecker coefficients (see e.g.~\cite[$\S 2.5$]{K2}).
Let us mention the following useful extension by Manivel~\cite{Man}:
\ts in conditions of the theorem, we have
$$
g(\la+\al,\mu+\be,\nu+\ga)\, \ge \, \max\bigl\{\ts g(\la,\mu,\nu), \ts g(\al,\be,\ga)\ts\bigr\}\ts.
$$
}
\end{rem}
% \vskip.6cm

\bigskip

\section{The proofs}

\begin{proof}[Proof of Lemma~\ref{l:add}]
Let $\la=\mu = (m_1^\ell)$, $\al=\be = (m_2^\ell)$, $\nu=(\ell\ts m_1-r,r)$,
$\ga=(\ell\ts m_2-s,s)$.  By the strict unimodality assumption for $(\ell,m_1)$
and $(\ell,m_2)$ and Lemma \ref{l:g_partitions}, we have
$$
g(m_1^\ell,m_1^\ell,\nu) \. > \. 0, \qquad g(m_2^\ell,m_2^\ell,\ga) \. > \. 0,
$$
for all $r,s \ge 0, \ts \ne 1$. Apply Theorem~\ref{t:semi} to the fixed partitions above.  Now, for all $k=r+s$, we then have
$$
g\bigl((m_1+m_2)^\ell,(m_1+m_2)^\ell,\tau_k\bigr) \. = \.
g(m_1^\ell+m_2^\ell,m_1^\ell+m_2^\ell,\nu+\ga) \. > \. 0,
$$
where $n=(m_1+m_2)\ell$ and $\tau_k=(n-k,k)$ as before.
For $k\leq 3$ we can choose $(r,s)=(0,k)$ or $(k,0)$, as at it is implicit that $\ell,m_1,m_2\geq 2$ and one of them is $\geq 3$.
For $3<k\leq \lfloor n/2 \rfloor-1$ we have that $k\leq \lfloor \ell m_1/2 \rfloor + \lfloor \ell m_2/2\rfloor$, so there are values $r,s\geq 2$, $r \leq  \lfloor \ell m_1/2 \rfloor $ and $s \leq  \lfloor \ell m_2/2 \rfloor$, such that $k=r+s$. Finally, when $k=  \lfloor n/2 \rfloor$, by the parity conditions we have that at least one of $\ell m_1$, $\ell m_2$ is even, so we can choose $(r,s)=(\ell m_1/2, \lfloor \ell m_2/2 \rfloor)$ or $(\lfloor \ell m_1/2 \rfloor, \ell m_2/2)$.
%Since such~$r$ and~$s$ exist for every $k\neq 1$, and $m_1,m_2>2$, we have the result.
\end{proof}

\begin{thm}\label{t:rest}
Let $m, \ell \ge 2$.
Strict unimodality~$(\circ)$ as in Theorem~\ref{t:strict} holds for pairs $(\ell,m)$, $\ell\le m$, 
if and only if \ts $\ell=m=2$, or \ts $\ell,m\ge 5$ \ts with the exception of the following values:
$$
\{\ts(5,6)\ts, \. (5,10)\ts, \. (5,14)\ts, \. (6,6)\ts, \. (6,7)\ts, \. (6,9)\ts, \. (6,11)\ts, \. (6,13)\ts, \. (7,10)\ts\}\ts.
$$
\end{thm}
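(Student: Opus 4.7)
The plan is to prove both directions of the theorem by combining Theorem~\ref{t:strict}, Lemma~\ref{l:add}, and a finite amount of direct computation.

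For the ``only if'' direction, each of the nine listed exceptional pairs is verified by direct computation of $\binom{m+\ell}{m}_q$, exhibiting an index $k$ where $p_{k-1}(\ell,m)=p_k(\ell,m)$ (equivalently, where $g(m^\ell,m^\ell,\tau_k)=0$ by Lemma~\ref{l:g_partitions}). For the infinite families with $\ell\in\{2,3,4\}$ and $(\ell,m)\neq(2,2)$, strict unimodality fails by an explicit analysis of the coefficients: for $\ell=2$ and $m\ge 3$ one has $p_2(2,m)=p_3(2,m)=2$, and similar equalities $p_{k-1}(\ell,m)=p_k(\ell,m)$ can be pinpointed for $\ell=3,4$ at some $k$ depending on $m$ (a classical observation about flat regions of $\binom{m+\ell}{\ell}_q$ for small $\ell$).

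For the ``if'' direction, the case $(2,2)$ is immediate from $\binom{4}{2}_q=1+q+2q^2+q^3+q^4$. Pairs with $\ell,m\ge 8$ are already covered by Theorem~\ref{t:strict}. The remaining task is to establish strict unimodality for $\ell\in\{5,6,7\}$ and $m\ge\ell$ outside the listed exceptions, using the symmetry $p_k(\ell,m)=p_k(m,\ell)$. I would proceed by a base-case-plus-induction scheme: for each such $\ell$, verify by direct computation a finite base set $B_\ell$ of strictly unimodal pairs $(\ell,m)$; then for every larger valid $m\notin B_\ell$, exhibit a decomposition $m=m_1+m_2$ with $(\ell,m_i)$ already verified and with at least one of $\ell,m_1,m_2$ even, so that Lemma~\ref{l:add} applies.

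The main obstacle is the parity-constrained bookkeeping for odd $\ell\in\{5,7\}$: at least one of $m_1,m_2$ must be even, yet the ``bad'' even values ($m\in\{10,14\}$ for $\ell=5$, $m=10$ for $\ell=7$) cannot be used as summands. Consequently, medium-sized values such as $(5,18)$, $(5,22)$, $(7,18)$ cannot be reached from smaller valid decompositions and must be placed in $B_\ell$ and verified directly. After including these ``gap'' values, a short case analysis shows that the inductive step $m=8+(m-8)$ (or occasionally $m=12+(m-12)$) handles every remaining $m$. For $\ell=6$ (even), the parity condition is automatic, but the five exceptions $\{6,7,9,11,13\}$ still force including several medium-sized base cases. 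Once the finite sets $B_\ell$ are verified, repeated application of Lemma~\ref{l:add} closes the induction.
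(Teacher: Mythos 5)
Your overall strategy coincides with the paper's: verify a finite base set by direct computation, propagate $(\circ)$ with Lemma~\ref{l:add} and the symmetry $p_k(\ell,m)=p_k(m,\ell)$, dispose of $(2,2)$ and of the nine exceptional pairs by inspection, and rule out $\ell\in\{2,3,4\}$ separately. The genuine difference is the bookkeeping of the inductive step for $\ell\in\{5,6,7\}$: the paper computes directly for $5\le m\le 20$ and then repeatedly adds $m_2=10$, while you add $8$ (occasionally $12$) and explicitly refuse to use exceptional pairs as summands. Your extra care buys something real: since Lemma~\ref{l:add} requires an even member of $\{\ell,m_1,m_2\}$ and requires $(\circ)$ for both $(\ell,m_1)$ and $(\ell,m_2)$, the even exceptional values $(5,10)$, $(5,14)$, $(7,10)$ are indeed unusable, and, exactly as you observe, stranded cases such as $(5,18)$, $(5,22)$, $(7,18)$ (and likewise $(6,21)$, since $m'\in\{5,6,7,9,11,13\}$ are all unusable for $\ell=6$) admit no admissible decomposition into smaller verified pairs and must be checked directly. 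The paper's own write-up, which takes $m_2=10$ uniformly for $\ell=5,6,7$ and only computes up to $m=20$, glosses over precisely this point; your scheme repairs it at the cost of a slightly larger, still finite, computation. Your appeal to Theorem~\ref{t:strict} for $\ell,m\ge 8$ is legitimate (in the paper that range is re-derived inside this very proof, via direct computation for $8\le \ell,m\le 15$ plus Lemma~\ref{l:add}).

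The one place where you are thinner than the paper is the failure of $(\circ)$ for $\ell=3,4$ for every $m$. You assert that a flat spot ``can be pinpointed'' as a classical observation, but this is exactly the part that needs an argument uniform in $m$. The paper gets it from the explicit symmetric chain decompositions of $L(3,m)$ and $L(4,m)$ by Lindstr\"om and West \cite{Lin,West}: since the chain lengths there are at least $3$, some consecutive coefficients near the middle must coincide (recall that in a symmetric chain decomposition the difference $p_k-p_{k-1}$ counts chains whose lowest rank is $k$). To complete your proof you should either invoke these decompositions or supply an explicit equality $p_{k-1}(\ell,m)=p_k(\ell,m)$, with $k$ given as a function of $m$, for $\ell=3$ and $\ell=4$; your $\ell=2$ computation $p_2=p_3=2$ for $m\ge 3$ is fine as stated.
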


% This theorem covers few other cases not covered by Theorem~\ref{t:strict}.

\begin{proof}  A direct calculation gives strict unimodality for each $\ell\in \{8,\ldots,15\}$, and $8\le m < 16$.
For each fixed $\ell \in \{8,\ldots,15\}$ and $m\ge 16$, we have that $m=8a +b$ for $a\ge 1$ and $8 \le b <16$. Applying the additivity lemma successively  with $\ell, m_1=8k+b, m_2=8$ for $k=0,1,\ldots,a-1$, shows that $(\circ)$ holds for all $\ell \in  \{8,\ldots,15\}$ and $m\ge 16$.

Fixing any $m\geq 8$ and applying the additivity lemma in the direction of $\ell$ the same way by expressing $\ell = 8a'+b'$, shows that $(\circ)$ holds for all $m,\ell \geq 8$.

A direct calculation also gives strict unimodality for all values of $\ell \in \{5,6,7\}$ and $5\leq m \leq 20$ with the exception of the listed cases, where the middle three coefficients  of the expansion of $\binom{\ell+m}{m}_q$ are equal. Now we apply the additivity lemma for each value of $\ell=5,6,7$ and $m= 10a+b$ where  $10\le b \le 19$ and induct over $a$ with the values $m_1=10(a-1)+b$ and $m_2=10$. The cases $\ell>m$ follow from the symmetry.

Now, case $\ell=2$ is straightforward, since $p_{2i}(2,m)=p_{2i+1}(2,m)$
for all~$i<n/4$.  On the other hand, cases $\ell=3,4$ have been studied
in~\cite{Lin,West} using an explicit symmetric chain decomposition.  Since all
chain lengths there are~$\ge 3$, we obtain equalities for the middle coefficients.
%The remaining cases $m < \ell$ follow by the symmetry of $q$-binomial coefficients.
\end{proof}

\smallskip

% \newpage

\section{Final remarks} \label{s:fin}

\subsection{}  Let us quote a passage from~\cite{Syl} describing
how Sylvester viewed his work:

\smallskip

\begin{quote} ``\ts I am about to demonstrate a theorem which has been waiting proof for
the last quarter of a century and upwards.~[\ts ...\ts ]\ts~I accomplished with scarcely an
effort a task which I had believed lay outside the range of human power.''
\end{quote}

\smallskip

\noindent
The grandeur notwithstanding, it does reveal Sylvester's
excitement over his discovery.

\subsection{}
Proving unimodality is often difficult and involves a remarkable diversity
of applicable tools, ranging from analytic to bijective, from topological
to algebraic, and from Lie theory to probability.
We refer to~\cite{B1,B2,Sta-unim} for a broad overview of the subject.

\subsection{} \label{ss:fin-basis}
The Additivity Lemma gives an example of a $2$-dim \emph{Klarner system}, which
always have a finite basis (see~\cite{Reid}).

\subsection{}
The equation (KOH) in~\cite{Zei}, based on O'Hara's
combinatorial approach to unimodality of $q$-binomial coefficients~\cite{O},
gives a useful recurrence relation (cf.~\cite{K1,M1}).
It would be interesting to see if (KOH) can be used to prove
Theorem~\ref{t:strict}.

\subsection{}
An important generalization of Sylvester's theorem is the unimodality of
 $s_\la(1,q,\ldots,q^{m})$ as a polynomial in $q$, see~\cite[p.~137]{Mac}.
We conjecture that if the \emph{Durfee
square} size of~$\la$ is large enough, then these coefficients
are strictly unimodal. An analogue of~(KOH) in this case is in~\cite{K1}.

\subsection{}
In a different direction, we believe that for every $d\ge 1$ there exists $L(d)$,
s.t.~$p_{k}(\ell,m) - p_{k-1}(\ell,m)\ge d$ for all $L(d) < k \le \ell\ts m/2$,
and $m,\ell$ large enough.  Unfortunately, the tools in this paper are not directly
applicable.  However, for $\ell=m$, this follows from Prop.~11 in~\cite{Sta-unim},
and further extension of Thm.~5.2 in~\cite{PP}
on strict unimodality of the number of partitions into distinct odd parts.
Then, combined with Manivel's extension (see Remark~\ref{r:man}),  and the finite
basis theorem (see $\S$\ref{ss:fin-basis}), this would prove the conjecture
in a similar manner as the proof of Theorem~\ref{t:rest}.  We plan to return to this
problem in the future.

% \newpage

\vskip.44cm

{\small
\noindent
{\bf Acknowledgements.}
We are grateful to Stephen DeSalvo, Richard Stanley and Ernesto Vallejo 
for interesting conversations and helpful remarks.  We are especially 
thankful to Fabrizio Zanello for pointing out the error in the original
statement of Theorem~\ref{t:rest}.  
The first author was partially supported by  BSF and NSF grants,
the second by a Simons Postdoctoral Fellowship.
}

% \newpage

\vskip.8cm

%%%%%%%%%%%%%%%%%%%%%%%%%%%%%%%%%%%%%%%%%%%%%%%%%%%%%%%%%%%%%%%%%%%%%%%%

{\footnotesize

}

%%%%%%%%%%%%%%%%%%%%%%%%%%%%%%%%%%%%%%%%%%%%%%%%%%%%%%%%%%%%%%%%%%%%%%%%
\end{document}